\newtheorem{theorem}{\sc Theorem}
\newtheorem{lemma}[theorem]{\sc Lemma}
\newtheorem{proposition}[theorem]{\sc Proposition}
\newtheorem{corollary}[theorem]{\sc Corollary}
\begin{document}

\title{Limit Groups and Automorphisms of $\kappa$-Existentially Closed Groups}
\author{Burak KAYA, Mahmut KUZUCUO\u{G}LU, Patrizia LONGOBARDI*, Mercede MAJ }

\thanks{{\scriptsize
\hskip -0.4 true cm MSC(2020): Primary: 20B27; Secondary: 20B35, 20E36, 20F28.
\newline Keywords: Existentially closed groups, algebraically closed groups, automorphism groups.\\
Received: \\
$*$Corresponding author \\
 This work was supported by the National Group for Algebraic and Geometric Structures, and their Applications (GNSAGA-INdAM), Italy. The second author is grateful to the Department of Mathematics of the University of Salerno for its hospitality and support, while this investigation was carried out. }}

\maketitle

\begin{center}

Dedicated to Prof. Otto H. Kegel for his 90th birthday
\end{center}

\begin{center}

\today
\end{center}

\begin{abstract}
The structure of automorphism groups of $\kappa$-existentially closed groups  are studied
 by Kaya-Kuzucuo\u{g}lu in 2022.
 It was proved that Aut(G) is the union of subgroups of level preserving automorphisms  and $|Aut(G)|=2^\kappa$
 whenever $\kappa$ is an inaccessible cardinal and $G$ is the unique
 $\kappa$-existentially closed group of
  cardinality $\kappa$. The cardinality of the automorphism group of a $\kappa$-existentially
  closed group of cardinality $\lambda>\kappa$ is asked in Kourovka Notebook Question 20.40.
   Here we answer positively the promised case $\kappa=\lambda$ namely:
   If $G$ is a $\kappa$-existentially closed group of cardinality $\kappa$,
   then $|Aut(G)|=2^{\kappa}$. We also answer Kegel's question
    on universal groups, namely: For any uncountable cardinal $\kappa$,
  there exist universal groups of cardinality $\kappa$.
\end{abstract}

\section{Introduction}

Let $\kappa$ be an infinite cardinal. A group $G$ is called a $\kappa$-existentially closed group if
 every system of less than $\kappa$ many equations and in-equations with coefficients from $G$
 which has a solution in an overgroup $H\geq G$ already has a solution in $G$. The structure
  of the automorphism group of a $\kappa$-existentially closed group and the cardinality of the
 automorphism group of the unique $\kappa$-existentially closed group of cardinality $\kappa$
 for an inaccessible cardinal $\kappa$ is studied in \cite{kaya-k}.
The following open question was asked in \cite{kourovka} Kourovka-Notebook  Question 20.40:
 Let $G$ be a $\kappa$-existentially closed group of cardinality $\lambda > \kappa$.
  Is it true that $|Aut(G)|=2^{\lambda}$ ? While the general question in Kourovka-Notebook \cite{kourovka} remains open, the positive answer in the equality case $\lambda=\kappa$, which
 was promised in Kourovka-Notebook, is answered by using the results in \cite[Chapter I]{Kueker75}. More specifically, we prove the following.

 \begin{theorem}\label{mainauttheorem} Let $\kappa$ be a regular cardinal and $G$ be a
$\kappa$-existentially closed group of cardinality $\kappa$. Then $|Aut(G)|=2^{\kappa}$.
\end{theorem}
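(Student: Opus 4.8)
The plan is to bound $|\mathrm{Aut}(G)|$ from above and below. The upper bound is immediate: an automorphism of $G$ is in particular a function $G\to G$, so $|\mathrm{Aut}(G)|\le|G|^{|G|}=\kappa^{\kappa}=2^{\kappa}$. For the lower bound I would invoke the dichotomy of Kueker \cite[Chapter I]{Kueker75}: if $\mathfrak{M}$ is a structure whose universe has regular cardinality $\kappa$ and $|\mathrm{Aut}(\mathfrak{M})|<2^{\kappa}$, then $\mathfrak{M}$ has a \emph{small support}, that is, a set $X$ with $|X|<\kappa$ such that the identity is the only automorphism of $\mathfrak{M}$ fixing $X$ pointwise. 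Thus it suffices to show that our $\kappa$-existentially closed $G$ of cardinality $\kappa$ has no small support, i.e.\ that for every $S\subseteq G$ with $|S|<\kappa$ there is a nontrivial automorphism of $G$ fixing $S$ pointwise.

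Fix $S$ with $|S|<\kappa$ and put $H=\langle S\rangle$; then $|H|<\kappa$, and since $|H|<|G|$ we may choose $a\in G\setminus H$. (I take $\kappa$ uncountable throughout; for $\kappa=\aleph_{0}$ one argues instead from the classical theory of countable existentially closed groups, using that a closed subgroup of $\mathrm{Sym}(\mathbb{N})$ is countable or of cardinality $2^{\aleph_{0}}$.) The goal is to build a nontrivial $\phi\in\mathrm{Aut}(G)$ with $\phi|_{H}=\mathrm{id}$, which will fix $S$ pointwise and hence contradict the assumed support. I would construct $\phi$ by a back-and-forth of length $\kappa$: enumerate $G=\{g_{\alpha}:\alpha<\kappa\}$, start from an isomorphism $F_{0}\colon\langle H,a\rangle\to\langle H,b\rangle$ that extends $\mathrm{id}_{H}$ and has $b\ne a$ (the existence of such $b$ is the first supporting fact below), and successively extend along an increasing chain $(F_{\alpha})_{\alpha<\kappa}$ of partial isomorphisms of $G$ of cardinality $<\kappa$, adding $g_{\alpha}$ to the domain at even stages and to the range at odd stages, taking unions at limits. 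Regularity of $\kappa$ is what keeps each $F_{\alpha}$ of size $<\kappa$ through limit stages and makes $\phi=\bigcup_{\alpha<\kappa}F_{\alpha}$ a genuine automorphism of $G$ with $\phi(a)=b\ne a$ and $\phi|_{H}=\mathrm{id}$. This finishes the proof modulo the two facts that drive the construction, both of which I would derive from $\kappa$-existential closedness by means of amalgamated free products.

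First fact: there is $b\in G$, $b\ne a$, realising the same quantifier-free type over $H$ as $a$. To see this, form the amalgam $G*_{H}G'$ of two copies of $G$ over their common subgroup $H$; the two copies of $a$ are distinct there (since $G\cap G'=H$ and $a\notin H$) and are swapped by the automorphism of $G*_{H}G'$ interchanging the factors and fixing $H$, hence satisfy exactly the same equations and inequations over $H$. Therefore the system
\begin{align*}
\Sigma(x) &= \{\,w(x,\bar h)=1 : \bar h\in H^{<\omega},\ w(a,\bar h)=1\,\} \\
&\quad{}\cup\{\,w(x,\bar h)\ne 1 : \bar h\in H^{<\omega},\ w(a,\bar h)\ne 1\,\}\cup\{\,x\ne a\,\}
\end{align*}
(where the equalities and inequalities refer to $G$) has at most $|H|+\aleph_{0}<\kappa$ members and is solved in $G*_{H}G'$ by the second copy of $a$; by $\kappa$-existential closedness it is solved by some $b\in G$, and then $h\mapsto h\ (h\in H),\ a\mapsto b$ extends to the required isomorphism $\langle H,a\rangle\to\langle H,b\rangle$.

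Second fact (the main obstacle): every partial isomorphism $F$ of $G$ that restricts to an isomorphism $\langle A\rangle\to\langle B\rangle$ with $|A|<\kappa$ can be extended to include any prescribed $g\in G$ in its domain (and, symmetrically, in its range). Here one transports the quantifier-free type of $g$ over $\langle A\rangle$ through $F$ to a system $\Sigma'$ of fewer than $\kappa$ equations and inequations over $\langle B\rangle$, realises $\Sigma'$ in the amalgamated free product of $G$ with a copy of $\langle A,g\rangle$ in which the copy of $\langle A\rangle$ is identified with $\langle B\rangle$ via $F$ (the image of $g$ being a solution), and then pulls a solution $g'\in G$ back out by $\kappa$-existential closedness, noting that $F\cup\{(g,g')\}$ then extends to an isomorphism $\langle A,g\rangle\to\langle B,g'\rangle$. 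The real work is in this step: checking that $G$ embeds into the amalgam in question, that the transported system still has $<\kappa$ conditions, and that the maps produced are honest group-theoretic partial isomorphisms. Taken together, these facts express the folklore principle that a $\kappa$-existentially closed group of cardinality $\kappa$ is $<\kappa$-homogeneous; combined with Kueker's dichotomy they give $|\mathrm{Aut}(G)|=2^{\kappa}$. (Alternatively, branching the back-and-forth into a tree of height $\kappa$ would produce $2^{\kappa}$ automorphisms directly, bypassing the dichotomy.)
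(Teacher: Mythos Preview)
Your argument is largely sound, but the black box you invoke—``Kueker's dichotomy'' in the form \emph{if $|\mathrm{Aut}(\mathfrak{M})|<2^{\kappa}$ then $\mathfrak{M}$ has a support of size $<\kappa$}—is not a general theorem for uncountable regular $\kappa$. Kueker's result in \cite[Chapter I, Theorem 1.4]{Kueker75} is the countable case, and its proof does not transfinitely generalise as stated: when one tries to build a branching tree of partial automorphisms of an arbitrary structure of size $\kappa$, the no-small-support hypothesis gives nothing at limit stages $\gamma<\kappa$, where one must know that the union $\bigcup_{\alpha<\gamma}\sigma_{s\upharpoonright\alpha}$ still extends to a full automorphism. (Indeed, the cardinality of $\mathrm{Aut}(\mathfrak{M})$ for structures of size $\aleph_{1}$ is a delicate matter with independence phenomena.) What Kueker does have for regular $\kappa$ is \cite[Chapter I, Theorem 2.3]{Kueker75}: a structure of size $\kappa$ that is strongly $\kappa$-partially isomorphic to a strictly larger structure has $\kappa^{\mathrm{cf}(\kappa)}$ automorphisms—a different hypothesis.

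Fortunately your proof does not actually need the dichotomy. Your ``second fact'' (every partial isomorphism of size $<\kappa$ extends over any prescribed element, via amalgamated free products) establishes precisely the $<\kappa$-homogeneity that repairs the limit stages, and your parenthetical ``alternative'' of branching the back-and-forth into a binary tree of height $\kappa$ then goes through directly. That alternative \emph{is} the paper's proof: the authors build maps $\varphi_{f}$ indexed by $f\in{}^{\kappa}2$ via a tree of partial isomorphisms, branching at successors and taking unions at limits. The main technical difference is in how the branching and extension steps are realised: where you use amalgamated free products to transport quantifier-free types, the paper uses internal facts about $\kappa$-existentially closed groups—namely that isomorphic subgroups of size $<\kappa$ are conjugate \cite[Lemma 2.4]{kk}, and that centralisers of small subsets are large \cite[Lemma 3.6]{kk}—to produce the two distinct extensions (their Lemma \ref{nontrivialextend}). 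Your amalgam argument and their centraliser argument are interchangeable here; each buys the same extension property with comparable effort.
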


A group $U$ is called a \textit{universal group}
in the class of all groups if every group $H$ with $|H|\leq |U|$ is isomorphic
 to a subgroup of $U$. In \cite[Theorem C]{kegel-2008}, Kegel asked the following question:
 For which cardinals $\kappa$ does there exist universal groups of cardinality $\kappa$? Here, under GCH, we also answer this question using $\kappa$-existentially closed groups.

\begin{proposition}[GCH]\label{universal}  For any uncountable cardinal $\kappa$,
  there exist universal groups of cardinality $\kappa$.
\end{proposition}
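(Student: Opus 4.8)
The plan is to obtain the universal group as a $\kappa$-existentially closed group when $\kappa$ is a successor cardinal, and as an increasing union of existentially closed groups of cardinality less than $\kappa$ when $\kappa$ is a limit cardinal. I will use three standard ingredients: under GCH one has $\lambda^{<\lambda}=\lambda$ for every regular cardinal $\lambda$; whenever $\lambda^{<\lambda}=\lambda$, every group of cardinality $\le\lambda$ embeds into a $\lambda$-existentially closed group of cardinality $\lambda$, built by the usual transfinite construction that adjoins, in $\lambda$ rounds, solutions to all systems of fewer than $\lambda$ equations and inequations over the group obtained so far; and in any amalgamated free product $A*_{C}B$ both factors $A$ and $B$ embed. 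The mechanism to be exploited is that a $\lambda$-existentially closed group is universal not merely for groups of cardinality $<\lambda$ but for all groups of cardinality $\le\lambda$, and that along a chain of existentially closed groups a partial embedding of an increasing union can be pushed forward coherently.

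Assume first that $\kappa=\nu^{+}$, so that $\kappa$ is regular and, by GCH, $\kappa^{<\kappa}=\kappa$; fix a $\kappa$-existentially closed group $G$ with $|G|=\kappa$. Let $H$ be any group with $|H|\le\kappa$. If $|H|<\kappa$, then the statement that $G$ contains elements $h_{x}$ ($x\in H$) obeying the multiplication table of $H$ and satisfying $h_{x}\ne h_{y}$ whenever $x\ne y$ is a system of fewer than $\kappa$ equations and inequations over $G$; it has a solution in $G*H$ and hence, by $\kappa$-existential closedness, in $G$, and this solution is an embedding of $H$ into $G$. If $|H|=\kappa$, write $H=\bigcup_{\alpha<\kappa}H_{\alpha}$ as a continuous increasing union of subgroups with $|H_{\alpha}|<\kappa$, which is possible since $\kappa$ is regular, and construct embeddings $g_{\alpha}\colon H_{\alpha}\to G$ by transfinite recursion: at a successor stage, regard $H_{\alpha}$ as a common subgroup of $H_{\alpha+1}$ and of $G$ (the latter via $g_{\alpha}$), form $G*_{H_{\alpha}}H_{\alpha+1}$, and solve in $G$, again by $\kappa$-existential closedness, the system of fewer than $\kappa$ equations and inequations expressing that the images in $G$ of the elements of $H_{\alpha+1}\setminus H_{\alpha}$ extend $g_{\alpha}$ to an injective homomorphism; at limit stages take unions. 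Then $\bigcup_{\alpha<\kappa}g_{\alpha}$ is an embedding of $H$ into $G$, so $G$ is a universal group of cardinality $\kappa$.

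Assume now that $\kappa$ is a limit cardinal; when $\kappa$ is singular the previous argument is unavailable, since the standard construction produces a $\kappa$-existentially closed group of cardinality $\kappa^{<\kappa}$ and $\kappa^{<\kappa}\ge\kappa^{\mathrm{cf}(\kappa)}>\kappa$ in that case. Fix an increasing sequence $(\lambda_{i})_{i<\mathrm{cf}(\kappa)}$ of successor cardinals cofinal in $\kappa$ with $\lambda_{i}>|i|$, and build a chain of groups $(V_{i})_{i<\mathrm{cf}(\kappa)}$ in which $V_{i}$ is a $\lambda_{i}$-existentially closed group of cardinality $\lambda_{i}$ and $V_{j}\le V_{i}$ for $j<i$: at a successor stage, embed $V_{i}$ (of cardinality $\lambda_{i}\le\lambda_{i+1}$) into a $\lambda_{i+1}$-existentially closed group $V_{i+1}$ of cardinality $\lambda_{i+1}$; at a limit stage, embed $\bigcup_{j<i}V_{j}$ (of cardinality $<\lambda_{i}$) into a $\lambda_{i}$-existentially closed group $V_{i}$ of cardinality $\lambda_{i}$. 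Put $V=\bigcup_{i<\mathrm{cf}(\kappa)}V_{i}$, so that $|V|=\kappa$. Any group $H$ with $|H|<\kappa$ embeds into some $V_{i}$ and hence into $V$. Any group $H$ with $|H|=\kappa$ can be written as an increasing union $\bigcup_{i<\mathrm{cf}(\kappa)}H_{i}$ with $|H_{i}|<\lambda_{i}$; running the amalgamation-and-solving procedure of the previous paragraph with $V_{i}$ in place of $G$ at stage $i$, and using $\lambda_{i}$-existential closedness of $V_{i}$ (which applies because at every stage the relevant system has fewer than $\lambda_{i}$ equations and inequations), yields coherent embeddings $g_{i}\colon H_{i}\to V_{i}$ whose union embeds $H$ into $V$. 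Hence $V$ is a universal group of cardinality $\kappa$.

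I expect the main obstacle to be the limit, and especially the singular, case: one has to choose the cardinals $\lambda_{i}$, and for each given $H$ the decomposition $H=\bigcup_{i}H_{i}$, carefully enough that the system solved at stage $i$ genuinely has fewer than $\lambda_{i}$ equations and inequations — so that $\lambda_{i}$-existential closedness is enough and each $V_{i}$ can be taken of cardinality exactly $\lambda_{i}$ — while keeping the chain $(V_{i})$ coherent through limit stages and keeping $|V|$ equal to $\kappa$ rather than larger. By contrast, the successor case amounts to the classical fact that existentially closed groups are universal, strengthened from cardinality less than $\kappa$ to cardinality at most $\kappa$ by one transfinite back-and-forth of length $\kappa$, and uses GCH only through the existence of a $\kappa$-existentially closed group of cardinality exactly $\kappa$.
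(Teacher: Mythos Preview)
Your argument is correct, but it takes a genuinely different route from the paper's. The paper splits into the regular and singular cases: for regular $\kappa$ it simply cites \cite[Lemma~2.6]{kk} (every group of cardinality $\le\kappa$ embeds in a $\kappa$-existentially closed group) together with \cite[Corollary~8]{KaKeKu} (such groups of cardinality $\kappa$ exist under GCH), while for singular $\kappa$ it invokes Kegel's regular limit of symmetric groups $G_{\alpha+1}=\mathrm{Sym}(G_\alpha)$, which under GCH has cardinality $\aleph_\gamma$ at any limit stage $\gamma$ and is universal by Cayley's theorem. You instead split into successor and limit cardinals, and in the successor case you essentially reprove \cite[Lemma~2.6]{kk} by a transfinite amalgamation-and-solving argument; in the limit case you replace Kegel's symmetric-group tower by a chain $V_i$ of $\lambda_i$-existentially closed groups of cardinality $\lambda_i$ for successor $\lambda_i$ cofinal in $\kappa$, and embed an arbitrary $H$ of cardinality $\kappa$ into $\bigcup_i V_i$ by coherently extending partial embeddings along the chain. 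The paper's proof is shorter because it outsources the work to existing results and to Kegel's concrete construction; your proof is more self-contained and stays entirely within the world of existentially closed groups, and indeed your limit-case chain is close in spirit to the ``level'' stratification the paper establishes in Theorem~\ref{main}. The concern you flag about controlling cardinalities at limit stages is handled by your hypothesis $\lambda_i>|i|$ together with $\lambda_i$ being a successor (hence strictly above $\sup_{j<i}\lambda_j$, which is a limit cardinal), so the details do go through.
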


\section{Strongly $\kappa$-partially isomorphism and automorphism groups}

It turns out that some algebraic facts regarding $\kappa$-existentially closed groups can be
 obtained as a corollary, to the fact that such groups are strongly $\kappa$-partially isomorphic.
 Let us quickly remind this model-theoretic notion which first appeared in \cite{Kueker75}.

Given two structures $\mathfrak{A}$ and  $\mathfrak{B}$ over the same language with universes
$A$ and $B$ respectively, we say that $\mathfrak{A}$ and
 $\mathfrak{B}$ are \textit{strongly $\kappa$-partially isomorphic},
  written $\mathfrak{A} \cong_{\kappa}^s \mathfrak{B}$, if there exists a non-empty set
  $I$ of isomorphisms from substructures of $\mathfrak{A}$ to substructures of $\mathfrak{B}$ such that

\begin{enumerate}
\item[(i)]  For every $f \in I$ and every $X \subseteq A$ with $|X|<\kappa$,
there exists $g \in I$ such that $X \subseteq dom(g)$ and $f \subseteq g$,
\item[(ii)]  For every $f \in I$ and every $Y \subseteq B$
with $|Y|<\kappa$, there exists $g \in I$ such that $Y \subseteq ran(g)$ and $f \subseteq g$,
\item[(iii)]  $I$ is closed under unions of chains of length less than the cofinality of $\kappa$ namely $cf(\kappa)$.
\end{enumerate}

While the notion of being $\kappa$-partially isomorphic,
 a weakening of this notion that only requires the first two items (i) and (ii),
 is well-studied in the context of infinitary logic, e.g. see \cite{Dickmann75},
 this stronger notions seems to not have gotten enough attention.

Strongly $\kappa$-partial isomorphism allows one to carry out back-and-forth arguments
in a general setting. Indeed, we have that if $\mathfrak{A} \cong_{\kappa}^s \mathfrak{B}$ and $|A|=|B|=\kappa$,
 then $\mathfrak{A}$ and $\mathfrak{B}$ are isomorphic \cite[Chapter I, Theorem 2.1]{Kueker75}.
For the case of $\kappa$-existentially closed groups, we have the following.

\begin{theorem}\label{stronglyisomorphic} Let $\kappa$ be an uncountable cardinal.
Then any two $\kappa$-existentially closed groups are strongly $\kappa$-partially isomorphic.
\end{theorem}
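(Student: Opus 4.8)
The plan is to exhibit, for any two $\kappa$-existentially closed groups $\mathfrak{A}$ and $\mathfrak{B}$, an explicit witnessing family $I$ of isomorphisms between ``small'' substructures and verify the three clauses (i)--(iii) of strong $\kappa$-partial isomorphism. The natural candidate is
\[
I=\{\, f : f \text{ is an isomorphism from a subgroup } A_0\leq \mathfrak{A} \text{ onto a subgroup } B_0\leq \mathfrak{B},\ |A_0|<\kappa \,\}.
\]
First I would check that $I$ is nonempty: since $\kappa$ is uncountable, both groups contain a copy of, say, the trivial subgroup, and the identity map between them lies in $I$ (one could equally take any finitely generated common subgroup, but the trivial one suffices for nonemptiness).

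The heart of the argument is clause (i) (and symmetrically (ii)): given $f\in I$ with domain $A_0$ and range $B_0$, and given $X\subseteq A$ with $|X|<\kappa$, I must extend $f$ to some $g\in I$ with $X\subseteq \operatorname{dom}(g)$. Let $A_1=\langle A_0\cup X\rangle\leq\mathfrak{A}$; then $|A_1|<\kappa$ because $\kappa$ is a cardinal (closure under the group operations of a set of size $<\kappa$ stays of size $<\kappa$). The task is to embed $A_1$ into $\mathfrak{B}$ over $f$, i.e. to find an embedding $A_1\hookrightarrow\mathfrak{B}$ restricting to $f$ on $A_0$. This is exactly where $\kappa$-existential closedness of $\mathfrak{B}$ is used: pick a generating set of $A_1$ over $A_0$ of size $<\kappa$ (e.g.\ the elements of $X$), introduce that many new variables, and write down the quantifier-free diagram of $A_1$ relative to $A_0$ as a system of equations and inequations with coefficients from $A_0$, namely all relations $w(\bar a_0,\bar x)=1$ that hold in $A_1$ and all inequations $w(\bar a_0,\bar x)\neq 1$ that hold in $A_1$; this is a system of fewer than $\kappa$ conditions. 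Transport the coefficients through $f$ to get a system over $B_0\leq\mathfrak{B}$ of the same cardinality $<\kappa$. It has a solution in an overgroup of $\mathfrak{B}$ --- take the amalgamated free product $\mathfrak{B}\ast_{B_0}A_1$ (identifying $B_0$ with $A_0$ via $f^{-1}$), inside which the images of the generators of $A_1$ witness the system and, crucially, the inequations survive because $A_1$ embeds into the amalgam. By $\kappa$-existential closedness, the system has a solution $\bar b$ already in $\mathfrak{B}$; the map sending the generators of $A_1$ to $\bar b$ (and agreeing with $f$ on $A_0$) respects exactly the quantifier-free type of $\bar a_0,\bar x$, hence is a well-defined injective homomorphism $g:A_1\to\mathfrak{B}$ extending $f$. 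Then $g\in I$ and $X\subseteq A_1=\operatorname{dom}(g)$, as required. Clause (ii) is proved identically with the roles of $\mathfrak{A}$ and $\mathfrak{B}$ swapped, using $\kappa$-existential closedness of $\mathfrak{A}$.

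Clause (iii) is nearly immediate: if $(f_\alpha)_{\alpha<\delta}$ is a chain in $I$ with $\delta<\operatorname{cf}(\kappa)$, then $f=\bigcup_{\alpha<\delta}f_\alpha$ is an isomorphism between the (increasing unions of) subgroups $\bigcup_\alpha\operatorname{dom}(f_\alpha)\leq\mathfrak{A}$ and $\bigcup_\alpha\operatorname{ran}(f_\alpha)\leq\mathfrak{B}$, and its domain has cardinality $<\kappa$ precisely because $\delta<\operatorname{cf}(\kappa)$ forces the supremum of fewer-than-$\operatorname{cf}(\kappa)$ many cardinals each $<\kappa$ to remain $<\kappa$. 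Hence $f\in I$.

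I expect the main obstacle to be the ``survival of inequations'' point in clause (i): one must be careful that the overgroup of $\mathfrak{B}$ in which the transported system is declared solvable genuinely preserves the inequations coming from the quantifier-free diagram of $A_1$, which is why the amalgamated free product $\mathfrak{B}\ast_{B_0}A_1$ (rather than some arbitrary overgroup) is the right object --- the classical fact that the factors of an amalgamated free product embed into it is what guarantees no unwanted collapse. A secondary point worth stating carefully is that the well-definedness of $g$ uses that we included \emph{all} relations of $A_1$ over $A_0$ (not just a presentation), so that two words representing the same element of $A_1$ are sent to the same element of $\mathfrak{B}$; phrasing this via the quantifier-free type of the generating tuple makes it clean. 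Everything else --- nonemptiness, the cardinal arithmetic in (i) and (iii), and the symmetry argument for (ii) --- is routine.
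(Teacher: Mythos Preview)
Your proof is correct and follows the same overall strategy as the paper: you choose the same witnessing family $I$ (all isomorphisms between subgroups of cardinality $<\kappa$), and your arguments for nonemptiness and for clause~(iii) coincide with the paper's.

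The one point of divergence is the extension step~(i). The paper proceeds in two moves: first it embeds $A_1=\langle\operatorname{dom}(f),X\rangle$ into $\mathfrak{B}$ via some $\varphi$ (citing the lemma that any group of size $<\kappa$ embeds into a $\kappa$-existentially closed group), and then observes that $\varphi[\operatorname{dom}(f)]$ and $\operatorname{ran}(f)$ are isomorphic subgroups of $\mathfrak{B}$ of size $<\kappa$, hence conjugate by some $h\in\mathfrak{B}$ (citing the conjugacy lemma for $\kappa$-existentially closed groups); the composite $\iota_h\circ\varphi$ is then the required extension of $f$. You instead embed $A_1$ \emph{over $f$} in a single step, by transporting the quantifier-free diagram of $A_1$ over $A_0$ through $f$ and realizing it in $\mathfrak{B}$ using the amalgam $\mathfrak{B}*_{B_0}A_1$ as the overgroup witness. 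Both routes ultimately rest on amalgamation plus existential closedness; the paper's version is shorter because it outsources the work to two cited lemmas, while yours is more self-contained and makes the role of the amalgamated free product explicit. Your remarks about survival of inequations in the amalgam and about needing the full diagram (not merely a presentation) for well-definedness of $g$ are precisely the points that need care, and you have handled them correctly.
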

\begin{proof} Let $G$ and $H$ be two uncountable  $\kappa$-existentially closed groups.
Choose $I$ to be the set of all isomorphisms from subgroups of $G$ of cardinality less than
$\kappa$ to subgroups of $H$.

Let $f \in I$ and $X \subseteq G$ be such that $|X|<\kappa$. Set $K=\langle dom(f),X \rangle$.
 Then we have $|K|<\kappa$ and hence $K$ embeds into $H$ by
 \cite[Lemma 2.1]{kk}, say, $\varphi: K \rightarrow H$ is an embedding.
 Since $M=\varphi[dom(f)]$ and $N=ran(f)$ are isomorphic subgroups of $H$ via the isomorphism
 $\psi: M \rightarrow N$ given by $\psi(x)=(f \circ \varphi^{-1})(x)$, it follows from \cite[Lemma 2.4]{kk}
  that there exists $h \in H$ such that $\varphi(x)^ h=f(x)$ for all $x \in dom(f)$.
  Choose the map $g=\iota_h \circ \varphi$ where $\iota_h$ is the inner automorphism induced by the element $h\in H$.
   Then clearly $f \subseteq g$ and $X \subseteq dom(g)$.
  This shows that the first requirement (i) in the definition of $\cong_{\kappa}^s$ is satisfied.
  The proof of the second requirement (ii) is similar.

For the last requirement (iii), let $\mu<cf(\kappa)$ and let $\{f_{\alpha}\}_{\alpha<\mu} \subseteq I$
be a chain of isomorphisms. Then we have that $f=\bigcup_{\alpha<\mu} f_{\alpha}$
is an isomorphism and moreover,
$$|dom(f)|=\left|\bigcup_{\alpha<\mu} dom(f_{\alpha})\right|<\kappa$$
since $|dom(f_{\alpha})|<\kappa$
for all $\alpha<\mu<cf(\kappa)$. Therefore $f \in I$, which completes the proof that $G$
and $H$ are strongly $\kappa$-partially isomorphic.
\end{proof}

For uncountable $\kappa$, using Theorem \ref{stronglyisomorphic}
together with the  observation in the previous paragraph before Theorem \ref{stronglyisomorphic},
one can show that any two $\kappa$-existentially closed groups of  cardinality $\kappa$ are
 isomorphic, which was first proven in \cite[Theorem 2.7]{kk}. The argument there,
 which had been independently discovered by the authors, was essentially
  the proof of Theorem \ref{stronglyisomorphic} carried out together with the proof of
   \cite[Chapter I, Theorem 2.1]{Kueker75}.

So as a corollary, we obtain the following.

\begin{corollary} Let $\kappa$ be an uncountable cardinal. Then any two  $\kappa$-existentially
 closed groups of  cardinality $\kappa$ are isomorphic.
\end{corollary}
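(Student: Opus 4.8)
The plan is to read this off directly from Theorem \ref{stronglyisomorphic} together with the general model-theoretic fact recorded just before it. Concretely, I would take two $\kappa$-existentially closed groups $G$ and $H$ with $|G|=|H|=\kappa$; by Theorem \ref{stronglyisomorphic} we have $G\cong_{\kappa}^s H$, witnessed by a nonempty set $I$ of isomorphisms between subgroups of $G$ of cardinality $<\kappa$ and subgroups of $H$ satisfying (i)--(iii). Since the two universes have the same cardinality $\kappa$, \cite[Chapter I, Theorem 2.1]{Kueker75} immediately gives that $G$ and $H$ are isomorphic, which is precisely the assertion. So the proof is essentially a one-line invocation.

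For the sake of a self-contained account I would also sketch why that cited theorem holds, since its back-and-forth is exactly what powers the corollary. Fix enumerations $G=\{a_\alpha:\alpha<\kappa\}$ and $H=\{b_\alpha:\alpha<\kappa\}$ and construct an increasing chain $(f_\alpha)_{\alpha<\kappa}$ in $I$: begin with any $f_0\in I$; at a successor step of even index use (i) to extend the current map so that its domain contains the least not-yet-covered $a_\alpha$, and at a successor step of odd index use (ii) to extend it so that its range contains the least not-yet-covered $b_\alpha$; at limit stages pass to the union, which again lies in $I$ by (iii). The union $f=\bigcup_{\alpha<\kappa}f_\alpha$ is then an isomorphism from a subgroup of $G$ onto a subgroup of $H$ whose domain exhausts $G$ and whose range exhausts $H$, i.e.\ an isomorphism $G\to H$.

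The one delicate point in this scheme — and the part I expect to be the real obstacle — is the use of (iii) at limit ordinals: it only guarantees closure of $I$ under unions of chains of length strictly less than $cf(\kappa)$, so when $\kappa$ is singular a naive recursion of length $\kappa$ can stall at a limit ordinal whose cofinality equals $cf(\kappa)$. In \cite{Kueker75} this is dealt with by organizing the back-and-forth along a continuous cofinal chain in $\kappa$ (equivalently, arranging that every limit stage actually encountered has cofinality below $cf(\kappa)$), so that (iii) always applies and no genuine obstruction remains; when $\kappa$ is regular the point is vacuous since then $cf(\kappa)=\kappa$. Either way the construction goes through, and the corollary follows. (One could alternatively cite the case $\lambda=\kappa$ of the argument in \cite[Theorem 2.7]{kk}, which is this same proof carried out by hand.)
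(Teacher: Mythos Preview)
Your proposal is correct and follows exactly the paper's approach: invoke Theorem~\ref{stronglyisomorphic} to get $G\cong_{\kappa}^s H$, then apply \cite[Chapter~I, Theorem~2.1]{Kueker75} using $|G|=|H|=\kappa$. Your additional self-contained sketch of the back-and-forth (including the care needed at limit stages when $\kappa$ is singular) goes beyond what the paper spells out, but is accurate and matches the paper's remark that the argument of \cite[Theorem~2.7]{kk} is precisely this back-and-forth carried out by hand.
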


Another important consequence of Theorem \ref{stronglyisomorphic}
is that any $\kappa$-existentially closed group of cardinality $\kappa$ has $2^{\kappa}$
automorphisms whenever $\kappa$ is regular. This important result is
 obtained by the application of the following theorem due to Kueker.

 \begin{theorem}\cite[Chapter I, Theorem 2.3]{Kueker75}\label{automorphismfact} Let $\mathfrak{A}$ be of
cardinality $\kappa$. Suppose that $\mathfrak{A}$ is strongly $\kappa$-partially isomorphic
to some $\mathfrak{B}$ with $|B|>\kappa$. Then $\mathfrak{A}$ has at least $\kappa^{cf(\kappa)}$ automorphisms.
\end{theorem}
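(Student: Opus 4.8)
The plan is to realize $\kappa^{cf(\kappa)}$ automorphisms as the branches of a $\kappa$-branching tree of approximations. We may assume $\kappa$ is uncountable. Write $\lambda=cf(\kappa)$, fix an increasing continuous sequence $(\kappa_i)_{i<\lambda}$ cofinal in $\kappa$ with each $\kappa_i<\kappa$, an enumeration $A=\{a_\xi:\xi<\kappa\}$, and put $A_i=\{a_\xi:\xi<\kappa_i\}$, so $(A_i)_{i<\lambda}$ increases to $A$ with $|A_i|<\kappa$. Fix $I$ witnessing $\mathfrak{A}\cong^s_\kappa\mathfrak{B}$, with $\emptyset\in I$. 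Along the tree $T=\bigcup_{i<\lambda}{}^{i}\kappa$ I would recursively build \emph{matching pairs} $(f_t,g_t)\in I\times I$ with $dom(f_t)=dom(g_t)=:D_t$ and $ran(f_t)=ran(g_t)=:E_t$ of size $<\kappa$; then $\sigma_t:=g_t^{-1}\circ f_t$ is a partial automorphism of $\mathfrak{A}$ with $dom(\sigma_t)=ran(\sigma_t)=D_t$. The invariants to maintain are: (a) $t\preceq t'$ implies $f_t\subseteq f_{t'}$ and $g_t\subseteq g_{t'}$; (b) $A_{|t|}\subseteq D_t$; (c) for $|t|=i$ and $\alpha\ne\beta$, $\sigma_{t^\frown\alpha}$ and $\sigma_{t^\frown\beta}$ disagree at some common point of their domains. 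Granting such a system, each branch $s\in{}^{\lambda}\kappa$ yields $\sigma_s=\bigcup_{i<\lambda}\sigma_{s\upharpoonright i}\in Aut(\mathfrak{A})$ by (a) and (b), distinct branches give distinct automorphisms by (c), and hence $|Aut(\mathfrak{A})|\ge|{}^{\lambda}\kappa|=\kappa^{cf(\kappa)}$.

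The recursion mechanics use the three clauses of $\cong^s_\kappa$. At limit stages $i<\lambda$ along a branch one takes unions of the two chains; these have length $i<cf(\kappa)$, so by clause (iii) they remain in $I$, and they are still matching with domain of size $<\kappa$. At a successor stage, from $(f_t,g_t)$ one first secures (b) by using clause (i) to push $A_{i+1}$ into the domain of an extension of $f_t$, and then restores the matching condition by an $\omega$-length internal back-and-forth between the two sides: alternately use (i) and (ii) to make each of the two maps absorb the current domain and range of the other, and take unions of the resulting $\omega$-chains — legitimate by clause (iii) since $\omega<cf(\kappa)$, and of size $<\kappa$ since $cf(\kappa)>\omega$. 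This produces a matching pair $(\hat f,\hat g)\supseteq(f_t,g_t)$ with $D:=dom(\hat f)\supseteq A_{i+1}$, from which the children of $t$ will be grown.

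The real content is invariant (c), and this is the step I expect to be the main obstacle; it is also where $|B|>\kappa$ — and not merely $|B|\ge\kappa$ — is used. I would prove: from a matching pair $(\hat f,\hat g)$ with domain $D$ of size $<\kappa$ there are $\kappa$-many pairwise-incompatible matching extensions. For a fresh $p\in A\setminus D$ set $W_p=\{b\in B:\ f'(p)=b$ for some $f'\in I$ with $\hat f\subseteq f'$ and $p\in dom(f')\}$. By clause (ii) every $b\in B\setminus E$ lies in $W_p$ for some fresh $p$ (extend $\hat f$ to capture $b$ in its range; the preimage of $b$ must be fresh), so $B\setminus E=\bigcup_{p\in A\setminus D}W_p$; since $|A\setminus D|\le\kappa<|B\setminus E|$, some $W_p$ has cardinality $\ge\kappa$, so fix such a $p$ and a subset $W\subseteq W_p$ with $|W|=\kappa$. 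Using the back-and-forth once more — exhausting $A$ on the domain side via (i), absorbing all of $W$ on the range side via (ii), taking unions at limits via (iii) — build an embedding $e\colon\mathfrak{A}\hookrightarrow\mathfrak{B}$ with $\hat g\subseteq e$ and $W\subseteq ran(e)$. Now for each $b\in W$: pick $f_b\in I$ with $\hat f\subseteq f_b$ and $f_b(p)=b$ (possible, as $b\in W_p$), take the appropriate small restriction $g_b\in I$ of $e$ with $\hat g\subseteq g_b$ and $g_b(e^{-1}(b))=b$, and realign $(f_b,g_b)$ to a matching pair by the $\omega$-step procedure (which does not disturb the two pinned-down values). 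The resulting partial automorphism extends $\sigma_t$ and sends $p$ to $e^{-1}(b)$; since $e$ is injective, as $b$ ranges over $W$ the targets $e^{-1}(b)$ are pairwise distinct, so these $\kappa$-many partial automorphisms all contain $p$ in their domain and pairwise disagree there. Declaring them the children of $t$ secures (c), completing the construction. The crux, then, is this branching step: one must extract the ``extra room'' of $\mathfrak{B}$ through the pigeonhole on the $W_p$'s while simultaneously keeping the two halves of each certificate aligned and controlling the value at $p$; everything else is bookkeeping on cofinalities.
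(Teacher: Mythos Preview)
The paper does not prove this theorem. It records that Kueker stated it without proof, that the authors could not locate one, and then \emph{bypasses} the general statement entirely: it proves only Theorem~\ref{mainauttheorem} (a $\kappa$-existentially closed group of regular cardinality $\kappa$) by a direct construction tailored to that setting---a binary tree of height $\kappa$ of partial maps, with the splitting supplied by centralizer facts (Lemma~\ref{nontrivialextend}) and the limit stages handled by the inner-automorphism lemma \cite[Lemma~2.4]{kk}. No second structure $\mathfrak{B}$ and no witnessing family $I$ appear in that argument. So your sketch is attempting strictly more than the paper does, and by a genuinely different route: a $\kappa$-branching tree of height $cf(\kappa)$, with partial automorphisms realized as $g^{-1}\circ f$ for \emph{matching} pairs $(f,g)\in I^2$, and the branching obtained from a pigeonhole on the target sets $W_p$ exploiting $|B|>\kappa$.

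Two technical points in your sketch need more work. First, clauses (i) and (ii) guarantee an extension in $I$ but place no bound on its domain; your invariant $|D_t|<\kappa$ is asserted rather than derived, and without it a single application of (i) could return a map defined on all of $A$, after which no fresh $p$ exists and the $\omega$-step realignment cannot even be iterated (the next use of (i) would require $|X|<\kappa$). In the paper's concrete $I$ from Theorem~\ref{stronglyisomorphic} the maps have small domain by fiat, which is exactly the simplification the authors buy by abandoning the general statement. Second, both your realignment and your cardinality bookkeeping invoke $\omega<cf(\kappa)$, but ``$\kappa$ uncountable'' does not imply this; when $cf(\kappa)=\omega$ clause (iii) is vacuous and $\omega$-unions need not remain in $I$, so the successor step as written fails. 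The latter is repairable by interleaving one realignment move per tree level rather than a full $\omega$-realignment at each node; the former needs either a reduction showing $I$ may be replaced by a family of small maps still closed under (iii), or a reorganization of the branching step that does not rely on $D_t\subsetneq A$.
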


There are arbitrarily large $\kappa$-existentially closed groups which can be obtained
by limits of regular representations see, \cite[Corollary 8]{KaKeKu}. Consequently, Theorem \ref{stronglyisomorphic} and Theorem \ref{automorphismfact} together imply Theorem \ref{mainauttheorem}.

Kueker seems to have intentionally omitted the proof of Theorem \ref{automorphismfact}
leaving it to a future paper. Unfortunately, we were unable to locate the relevant paper of the author.
 Therefore, for completeness, we will give a direct proof of
 Theorem \ref{mainauttheorem} using the ideas in the proof of \cite[Chapter I, Theorem 1.4]{Kueker75}
 and employing basic facts about $\kappa$-existentially closed groups to handle certain points,
  which would have required more effort if we were to prove Theorem \ref{automorphismfact}
   in its full generality.

Before we proceed, let us explain a notation that will be used in the proof.
 For two groups $G,H$ and two sequences $\{g_{\alpha}\}_{\alpha<\delta}$ and $\{h_{\alpha}\}_{\alpha<\delta}$
  of elements of $G$ and $H$ respectively, we will write $(G,\{g_{\alpha}\}_{\alpha<\delta})\cong(H,\{h_{\alpha}\}_{\alpha<\delta})$
  to mean that there is an isomorphism
   $\varphi: G \rightarrow H$ such that $\varphi(g_{\alpha})=h_{\alpha}$ for all $\alpha<\delta$.

Recall that by \cite[Corollary 7]{KaKeKu}, if there exists a
$\kappa$-existentially closed group of cardinality $\kappa$, then $\kappa$ is a regular cardinal.

\begin{lemma}\label{nontrivialextend} Let $\kappa$ be an uncountable cardinal.
 Let $G$ be a $\kappa$-existentially
closed group and $\{g_{\alpha}\}_{\alpha<\delta}$ be a sequence of elements of $G$ where $\delta<\kappa$.
Then for any $\{g'_{\alpha}\}_{\alpha<\delta}$ with
\[ (G,\{g_{\alpha}\}_{\alpha<\delta}) \cong (G,\{g'_{\alpha}\}_{\alpha<\delta})\]
there exist $k,h,h' \in G$ with $h \neq h'$ such that
\[(G,\{g_{\alpha}\}_{\alpha<\delta},k) \cong (G,\{g'_{\alpha}\}_{\alpha<\delta},h)\]
\[(G,\{g_{\alpha}\}_{\alpha<\delta},k) \cong (G,\{g'_{\alpha}\}_{\alpha<\delta},h')\]
\end{lemma}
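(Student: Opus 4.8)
The plan is to reduce the statement to producing a single nontrivial automorphism of $G$ that fixes each $g'_\alpha$, and then to transport it along the isomorphism supplied by the hypothesis. By the notational convention, there is an isomorphism $\varphi\colon G\to G$ with $\varphi(g_\alpha)=g'_\alpha$ for all $\alpha<\delta$; fix one such $\varphi$. Put $A=\langle g'_\alpha:\alpha<\delta\rangle\leq G$; since $\delta<\kappa$ and $\kappa$ is uncountable, $|A|\leq\max(|\delta|,\aleph_0)<\kappa$.

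The crux is to find $b,c\in G$ with $c$ centralizing $A$ and $[b,c]\neq 1$. I would obtain these from the $\kappa$-existential closedness of $G$: the system consisting of the equations $[c,a]=1$ for $a\in A$ together with the single inequation $[b,c]\neq 1$ has fewer than $\kappa$ equations and inequations with coefficients in $G$, and it is solvable in the overgroup $H=G\times F$, where $F$ is free on $\{x,y\}$, by taking $c=(1,x)$ and $b=(1,y)$ — these centralize all of $G$, and $[(1,x),(1,y)]\neq 1$. Hence the system has a solution $b,c\in G$.

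Now set $\psi=\iota_c\colon z\mapsto c^{-1}zc$, an automorphism of $G$. Since $c$ centralizes $A$, $\psi$ fixes every $g'_\alpha$, and since $[b,c]\neq 1$ we have $\psi(b)=c^{-1}bc\neq b$. Then $k=\varphi^{-1}(b)$, $h=b$ and $h'=\psi(b)$ do the job: $\varphi$ witnesses $(G,\{g_\alpha\}_{\alpha<\delta},k)\cong(G,\{g'_\alpha\}_{\alpha<\delta},h)$ (as $\varphi(g_\alpha)=g'_\alpha$ and $\varphi(k)=b=h$), the composite $\psi\circ\varphi$ witnesses $(G,\{g_\alpha\}_{\alpha<\delta},k)\cong(G,\{g'_\alpha\}_{\alpha<\delta},h')$ (as $(\psi\circ\varphi)(g_\alpha)=\psi(g'_\alpha)=g'_\alpha$ and $(\psi\circ\varphi)(k)=\psi(b)=h'$), and $h\neq h'$.

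The only point that is not mere bookkeeping with $\varphi$ and $\psi$ is the existence of the pair $b,c$, and this is exactly where $\kappa$-existential closedness is used, via a cheap overgroup in which the system is visibly solvable. The assumption that $\kappa$ is uncountable enters only to ensure $|A|<\kappa$ (hence fewer than $\kappa$ constraints) when $\delta$ is finite or countable. Equivalently, one could invoke the basic fact that $\kappa$-existentially closed groups have trivial centre and simply solve $\{[c,a]=1:a\in A\}\cup\{c\neq 1\}$ in $G\times\mathbb{Z}$, obtaining a nontrivial inner automorphism fixing $A$ pointwise; or one could use $H=G*_A(A\times F)$ to arrange in addition that $b$ centralizes $A$.
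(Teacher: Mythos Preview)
Your proof is correct and follows essentially the same strategy as the paper: find a nontrivial inner automorphism $\iota_c$ fixing the tuple pointwise, then compose with the given $\varphi$ to get two distinct extensions. The only difference is cosmetic---the paper works on the $\{g_\alpha\}$ side (using $\varphi\circ\iota_g$) and cites two lemmas from \cite{kk} to obtain a nontrivial $g\in C_G(\{g_\alpha\})$ and a $k$ with $k^g\neq k$, whereas you work on the $\{g'_\alpha\}$ side (using $\iota_c\circ\varphi$) and obtain $b,c$ directly from the definition of $\kappa$-existential closedness via the overgroup $G\times F$.
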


\begin{proof}
 Let $(g'_{\alpha})_{\alpha<\delta}$ be such that
$(G,\{g_{\alpha}\}_{\alpha<\delta}) \cong (G,\{g'_{\alpha}\}_{\alpha<\delta})$, say, $\varphi: G \rightarrow G$
be an automorphism such that
$\varphi(g_{\alpha})=g'_{\alpha}$ for all $\alpha<\delta$.
 By \cite[Lemma 3.6]{kk} we have $|C_G(\{g_{\alpha}\}_{\alpha<\delta})|=|G|$ so,
there exist
$1 \neq g \in C_G(\{g_{\alpha}\}_{\alpha<\delta})$ and also by \cite[Lemma 2.10]{kk} we have
$|G|=|G:C_G(g)|$, and so
any nontrivial coset representative $k$ of $C_G(g)$ in $G$ satisfies $k^g\neq k$.
So  we can choose some element $k \in G$ such that $k^g \neq k$.
Set $h=\varphi(k)$ and $h'=\varphi(k^g)$.
Then clearly $h \neq h'$ and moreover, we have
\[(G,\{g_{\alpha}\}_{\alpha<\delta},k) \cong (G,\{g'_{\alpha}\}_{\alpha<\delta},h)\]
\[(G,\{g_{\alpha}\}_{\alpha<\delta},k) \cong (G,\{g'_{\alpha}\}_{\alpha<\delta},h')\]
by the automorphisms $\varphi$ and $\varphi \circ \iota_g$ respectively.
\end{proof}

\section{Proof of Theorem \ref{mainauttheorem}}

We are now ready to prove Theorem \ref{mainauttheorem} by transfinitely
generalizing the construction in the proof of \cite[Chapter I, Theorem 1.4]{Kueker75}
using $\kappa$-existential closedness.

\begin{proof}[Proof of Theorem \ref{mainauttheorem}] Let $G$ be a $\kappa$-existentially
closed group of cardinality $\kappa$, let, $\{a_{\alpha}\}_{\alpha<\kappa}$ be an  enumeration
of the elements of $G$. For each $f \in {^{\kappa}2}$ where $^{\kappa}2$
denotes the set of functions from $\kappa$ to $\{0,1\}$, we will obtain an
 automorphism $\varphi_f: G \rightarrow G$. Fix $f \in {^{\kappa}2}$.
 We will construct partial maps $\varphi_{f,\alpha}$ for every $\alpha<\kappa$ so that
\begin{itemize}
\item[(a).] If the domain of $\varphi_{f,\alpha}$ is $\{c_{\beta}\}_{\beta<\delta}$ where $\delta<\kappa$, then
\[ (G,\{c_{\beta}\}_{\beta<\delta}) \cong (G,\{\varphi_{f,\alpha}(c_{\beta})\}_{\beta<\delta})\]
\item[(b).] $\varphi_{f,\alpha}$ extends $\varphi_{f,\beta}$ whenever $\beta<\alpha$.
\item[(c).] The domain and range of $\varphi_{f,\alpha}$ both include $\{a_{\beta}\}_{\beta<\alpha}$.
\item[(d).] The domain and range of $\varphi_{f,\alpha}$ are of cardinality less than $\kappa$.
\end{itemize}
The maps $\varphi_{f,\alpha}$ will be constructed by induction on $\alpha<\kappa$ as follows.
\begin{itemize}
\item Set $\varphi_{f,0}=\emptyset$.
\item Let $\alpha<\kappa$. Suppose as inductive hypothesis that $\varphi_{f,\alpha}$
 has been constructed so that (a), (b), (c) and (d) hold for $\varphi_{f,\alpha}$.
  We will construct $\varphi_{f,\alpha+1}$ so that (a), (b), (c) and (d) hold for $\varphi_{f,\alpha+1}$.

By assumption, we know that
$(G,\{c_{\beta}\}_{\beta<\delta}) \cong (G,\{\varphi_{f,\alpha}(c_{\beta})\}_{\beta<\delta})$,
 say, via the automorphism  $\psi: G \rightarrow G$.
  Clearly we have that
\[(G,\{c_{\beta}\}_{\beta<\delta},a_{\alpha},\psi^{-1}(a_{\alpha})) \cong (G,\{\varphi_{f,\alpha}(c_{\beta})\}_{\beta<\delta},\psi(a_{\alpha}),a_{\alpha}).\]
Applying Lemma \ref{nontrivialextend}, we can obtain $k,h,h' \in G$ with $h \neq h'$
such that
\[(G,\{c_{\beta}\}_{\beta<\delta},a_{\alpha},\psi^{-1}(a_{\alpha}),k) \cong (G,\{\varphi_{f,\alpha}(c_{\beta})\}_{\beta<\delta},\psi(a_{\alpha}),a_{\alpha},h)\]
\[(G,\{c_{\beta}\}_{\beta<\delta},a_{\alpha},\psi^{-1}(a_{\alpha}),k) \cong (G,\{\varphi_{f,\alpha}(c_{\beta})\}_{\beta<\delta},\psi(a_{\alpha}),a_{\alpha},h').\]
The partial map $\varphi_{f,\alpha+1}$ is now defined as follows.

\item The domain of $\varphi_{f,\alpha+1}$ is $\{c_{\beta}\}_{\beta<\delta} \cup \{a_{\alpha},\psi^{-1}(a_{\alpha}),k\}$ where the latter elements are added to the end of the sequence.
\item $\varphi_{f,\alpha+1}(c_{\beta})$ extends $\varphi_{f,\alpha}$ on $\{c_{\beta}\}_{\beta<\delta}$
\item $\varphi_{f,\alpha+1}(a_{\alpha})=\psi(a_{\alpha})$
\item $\varphi_{f,\alpha+1}(\psi^{-1}(a_{\alpha}))=a_{\alpha}$
\item $\varphi_{f,\alpha+1}(k)=\begin{cases} h & \text{ if } f(\alpha)=0\\ h' & \text{ if } f(\alpha)=1 \end{cases}$
\end{itemize}

It is now easily checked that (a), (b), (c) and (d) hold for $\varphi_{f,\alpha+1}$.

\item Let $\gamma<\kappa$ be a limit ordinal. Suppose as inductive hypothesis $\varphi_{f,\alpha}$
has been constructed for all
 $\alpha<\gamma$ so that (a), (b), (c) and (d) hold for $\varphi_{f,\alpha}$ for each $\alpha<\gamma$.
  We will construct $\varphi_{f,\gamma}$ so that (a), (b), (c) and (d) hold for $\varphi_{f,\gamma}$.

Set $\varphi_{f,\gamma}=\bigcup_{\alpha<\gamma} \varphi_{f,\alpha}$. Observe that,
since $\varphi_{f,\alpha}$'s are compatible with each other on their common domains
 by the inductive assumption, we have that $\varphi_{f,\gamma}$
is indeed a partial map with domain $K=\bigcup_{\alpha<\gamma} dom(\varphi_{f,\alpha})$.
 Let us enumerate $K$, say, $K=\{c_{\beta}\}_{\beta<\delta}$ for some cardinal  $\delta$.

That the requirements (b) and (c) hold for $\varphi_{f,\gamma}$ is immediate.
 To see that the requirement (d) holds for $\varphi_{f,\gamma}$,
 note that $\gamma<\kappa$ and each $dom(\varphi_{f,\alpha})$
 has cardinality less than $\kappa$ by the inductive assumption.
  It now follows from the regularity of $\kappa$ that $|K|<\kappa$.

By \cite[Lemma 2.4]{kk}, $|K|<\kappa$ implies that there exists an inner automorphism induced by
$k \in G$ such that $\varphi_{f,\gamma}(x)=x^k$ for all $x \in K$. Consequently, we have
\[ (G,\{c_{\beta}\}_{\beta<\delta}) \cong (G,\{\varphi_{f,\gamma}(c_{\beta})\}_{\beta<\delta})\]
\noindent via the automorphism
$\iota_k$. Therefore, the requirement (a) holds for $\varphi_{f,\gamma}$.
 This finishes the inductive construction.

We now define $\varphi_f = \bigcup_{\alpha<\kappa} \varphi_{f,\alpha}$.
It is straightforward to check that $\varphi_f$ is an automorphism of $G$
for if it were not, then there would be finitely many witnesses for its failure
 of being a homomorphism or a bijection. But these witnesses would have to appear
 in an early stage $\alpha<\kappa$ of the inductive construction.
However, this cannot happen by the requirements (a), (b) and (c).

Before we finish the proof, we will need one last observation.
 While the notation $\varphi_{f,\alpha}$ suggests that the construction of the map
  $\varphi_{f,\alpha}$ depends on $f$ itself, it actually depends on the restriction of
 $f$ to $\alpha$. Therefore, by choosing the same witnesses $\psi, k,h, h'$ for
 functions with the same restriction onto $\alpha$
 throughout the inductive construction,
 we may assume without loss of generality that if
 $f \upharpoonright \alpha = f' \upharpoonright \alpha$,
  then $\varphi_{f,\alpha}=\varphi_{f',\alpha}$.

Finally, let $f,f' \in {^{\kappa}2}$ be such that $f \neq f'$.
Pick the least $\alpha<\kappa$ such that $f(\alpha) \neq f'(\alpha)$.
 Then, $f \upharpoonright \alpha = f' \upharpoonright \alpha$ and hence
  $\varphi_{f,\alpha}=\varphi_{f',\alpha}$ by the previous observation.
This implies $\varphi_{f,\alpha+1} \neq \varphi_{f',\alpha+1}$
because we have chosen the same $k,h,h' \in G$ for both $f$ and $f'$ at stage
 $\alpha$ of the construction. This subsequently implies $\varphi_{f} \neq \varphi_{f'}$.
  Therefore, the map $f \mapsto \varphi_f$ is an injection from
 ${^{\kappa}2}$ to $Aut(G)$ showing that $|Aut(G)|=2^{\kappa}$.\end{proof}

 Observe that \cite[Corollary 5]{kaya-k} is an immediate corollary of Theorem \ref{mainauttheorem} since inaccessible cardinals are regular.\\

 \section{Levels of a $\kappa$-existentially closed group}

 Having established that any $\kappa$-existentially closed group $G$ of cardinality $\kappa$
has $2^{\kappa}$ automorphisms, we would like to point out that, our construction of these
automorphisms depend on a well-ordering of $G$. For an explicit construction of
 $2^{\kappa}$ automorphisms of a limit of regular representations of length $\kappa$,
 we refer the reader to \cite{kaya-k},
 where the first two authors established a way to produce
  $2^{\kappa}$ level preserving automorphisms.

Since stratifications of $\kappa$-existentially closed groups into ``levels" seem to naturally
 arise in the context, we would like to note the following related result that shows that one
  can stratify a $\kappa$-existentially closed group of cardinality $\kappa$ into levels that can be made as much existentially closed as possible.

\begin{theorem}[GCH]\label{main} Let $\kappa$ be an uncountable cardinal and
 $I=\{\lambda \leq \kappa: \lambda \text{ is regular}\}$.
  Let $G$ be a $\kappa$-existentially closed group of cardinality $\kappa$.
   Then $G$ has a chain $\{G_\alpha\}_{\alpha \in I}$ of subgroups
\[\langle 1 \rangle <G_{\aleph_0} <G_{\aleph_1}< \ldots <G_\lambda < \ldots\]
with $G=\bigcup_{\lambda \in I} G_{\lambda}$, where the subgroup $G_{\lambda}$
 is a $\lambda$-existentially closed group of cardinality $\lambda$
  for every regular cardinal $\lambda \leq \kappa$ with $G_{\kappa}=G$.
\end{theorem}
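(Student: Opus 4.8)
The plan is to build the chain $\{G_\lambda\}_{\lambda\in I}$ by transfinite recursion along the (well-ordered) set $I$ of regular cardinals $\leq\kappa$, at each stage passing to a $\lambda$-existentially closed subgroup of the appropriate cardinality while respecting the subgroups already constructed. The basic tool is the standard fact (an easy consequence of the definition together with \cite[Corollary 8]{KaKeKu}, which produces arbitrarily large $\kappa$-existentially closed overgroups) that every group $A$ of cardinality $\leq\lambda$ embeds into a $\lambda$-existentially closed group of cardinality exactly $\lambda$; here GCH is what guarantees the cardinality can be kept down to $\lambda$ rather than $2^{<\lambda}$ or larger, since at each of the countably or uncountably many ``amalgamation'' sub-steps one only adds $\leq\lambda$ new elements and iterates $\lambda$ times. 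I would first record this embedding fact as the engine of the construction.

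Next I would set up the recursion. Enumerate $I$ in increasing order; since $\kappa$ is itself regular (it admits a $\kappa$-existentially closed group of its own cardinality, cf.\ \cite[Corollary 7]{KaKeKu}), the largest element of $I$ is $\kappa$ and we must land on $G_\kappa=G$. The natural way to force this is to build the chain \emph{inside} $G$ from the start: fix an enumeration $\{a_\alpha\}_{\alpha<\kappa}$ of $G$, and at stage $\lambda$ construct $G_\lambda\leq G$ as an internal $\lambda$-existentially closed subgroup of cardinality $\lambda$ that (i) contains $G_{\lambda'}$ for all smaller $\lambda'\in I$ and (ii) contains $\{a_\alpha : \alpha<\lambda\}$, the latter to ensure $\bigcup_{\lambda\in I}G_\lambda=G$. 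The key point making (i)+(ii) simultaneously achievable is that ``$\lambda$-existentially closed'' can be witnessed internally: starting from the subgroup generated by $\bigcup_{\lambda'<\lambda}G_{\lambda'}\cup\{a_\alpha:\alpha<\lambda\}$ (which has cardinality $\leq\lambda$), one alternately solves, inside $G$, every system of $<\lambda$ equations-and-inequations with parameters in the current subgroup that is solvable in some overgroup --- such systems are already solvable in $G$ by $\kappa$-existential closedness when $\lambda\leq\kappa$ --- adjoins the (at most $\lambda$-many, using GCH to count the systems) solutions, and closes up, iterating $\lambda$ times and taking unions at limits. Regularity of $\lambda$ keeps the cardinality at $\lambda$ throughout, and the resulting $G_\lambda$ is $\lambda$-existentially closed because any such system over $G_\lambda$ involves $<\lambda=\mathrm{cf}(\lambda)$ parameters, hence lives in an earlier sub-stage and got solved. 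At the top stage $\lambda=\kappa$ condition (ii) forces $G_\kappa\supseteq\{a_\alpha:\alpha<\kappa\}=G$, so $G_\kappa=G$, and $G$ is of course $\kappa$-existentially closed of cardinality $\kappa$ by hypothesis; the strict inclusions $G_{\lambda'}<G_\lambda$ follow since $|G_{\lambda'}|=\lambda'<\lambda=|G_\lambda|$.

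Finally I would verify the bookkeeping: that the union over $I$ equals $G$ (immediate from (ii) at stages below $\kappa$ and from $G_\kappa=G$), that each $G_\lambda$ has the claimed cardinality (from the cardinal arithmetic above, where GCH enters to bound the number of relevant systems of equations by $\lambda^{<\lambda}=\lambda$ when $\lambda$ is regular), and that the chain is genuinely increasing and indexed exactly by $I$. The main obstacle I anticipate is the cardinality control in the internal construction: one must check carefully, at each of the $\lambda$ sub-stages of building $G_\lambda$, that the number of systems of $<\lambda$ equations over a group of size $\leq\lambda$ is again $\leq\lambda$ --- this is precisely $\lambda^{<\lambda}$, which equals $\lambda$ for regular $\lambda$ under GCH (and, for $\lambda=\aleph_0$, is just $\aleph_0$ outright) --- and that taking the union of a chain of length $\lambda$ of such groups does not overshoot $\lambda$, which is where regularity of $\lambda$ is essential. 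Once that arithmetic is nailed down, conditions (a)--(d)-style checks are routine and the theorem follows.
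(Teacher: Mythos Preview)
Your proposal is correct and follows essentially the same approach as the paper. The paper isolates your ``internal $\lambda$-closure inside $G$'' step as a separate lemma (given $K\leq H$ with $|K|=\lambda$ and $\lambda^{<\lambda}=\lambda$, build a $\lambda$-existentially closed $\overline{K}$ with $K\leq\overline{K}\leq H$), then runs the same recursion over $I$ with the same enumeration-of-$G$ bookkeeping you describe; the only point you leave slightly implicit is the product trick $H\hookrightarrow H\times\widehat{K}$ needed to pass from ``solvable in some overgroup of the small subgroup'' to ``solvable in $G$'', which the paper spells out.
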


In order to prove Theorem \ref{main}, we shall need the following lemma,
 a primitive version of which essentially appeared in \cite[Theorem 3]{Scott51}
  without cardinality constraints.

\begin{lemma}\label{scottslemmaextended} Let $\lambda \leq \kappa$ be
uncountable cardinals such that $\lambda^{<\lambda}=\lambda$. Let $H$ be
 a $\kappa$-existentially closed group and let $K \leqslant H$
 be a subgroup of cardinality $\lambda$. Then there exists a
  $\lambda$-existentially closed group  $\overline{K}$
   of cardinality $\lambda$ such that $K \leqslant \overline{K} \leqslant H$.
\end{lemma}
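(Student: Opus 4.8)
The plan is to build $\overline{K}$ inside $H$ as the union of a continuous chain $\{K_n\}_{n<\omega}$ of subgroups of $H$, each of cardinality $\lambda$, in the style of Scott's classical construction of algebraically closed groups, but with every step carried out under the bound $\lambda$ rather than with finite systems. First I would set $K_0 = K$. Given $K_n \leqslant H$ with $|K_n| = \lambda$, I would enumerate all systems $\Sigma$ of fewer than $\lambda$ equations and inequations with coefficients from $K_n$ that have a solution somewhere in an overgroup of $K_n$; since the language is fixed, coefficients come from a set of size $\lambda$, and $\lambda^{<\lambda} = \lambda$, there are at most $\lambda$ such systems. The key point is that each such $\Sigma$, having a solution in \emph{some} overgroup of $K_n$, must already have a solution in $H$: indeed $\Sigma$ is a system of size $<\lambda \leq \kappa$ with coefficients in $K_n \leqslant H$, and $H$ is $\kappa$-existentially closed, so it is witnessed in $H$. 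I would then let $K_{n+1}$ be the subgroup of $H$ generated by $K_n$ together with one such witnessing solution (a set of $<\lambda$ elements of $H$) for each of the $\lambda$-many systems. Then $|K_{n+1}| = \lambda$ and $K_{n+1} \leqslant H$, and I would take $\overline{K} = \bigcup_{n<\omega} K_n$, which has cardinality $\lambda$ and satisfies $K \leqslant \overline{K} \leqslant H$.

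The second half is to check that $\overline{K}$ is $\lambda$-existentially closed. Let $\Sigma$ be a system of fewer than $\lambda$ equations and inequations with coefficients from $\overline{K}$ that has a solution in some overgroup $L \geqslant \overline{K}$. Since $\Sigma$ involves $<\lambda$ coefficients and $\mathrm{cf}(\lambda) \geq \aleph_1$ (as $\lambda$ is uncountable, and indeed regular since $\lambda^{<\lambda}=\lambda$), all coefficients of $\Sigma$ lie in some single $K_n$; thus $\Sigma$ is one of the systems enumerated at stage $n$ (it has a solution in the overgroup $L$ of $K_n$), so by construction $K_{n+1}$, hence $\overline{K}$, contains a solution. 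This shows $\overline{K}$ is $\lambda$-existentially closed of cardinality $\lambda$ sitting between $K$ and $H$.

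The main obstacle — and the place where the hypothesis $\lambda^{<\lambda}=\lambda$ and the $\kappa$-existential closedness of $H$ are both genuinely used — is the cardinality bookkeeping: one must verify that at each stage there really are only $\lambda$-many relevant systems up to the choice one cares about (this needs $\lambda^{<\lambda}=\lambda$ to count $<\lambda$-sized systems over a $\lambda$-sized coefficient set), and that adjoining a $<\lambda$-sized witness for each keeps the cardinality at $\lambda$ (using $\lambda \cdot \lambda \cdot \lambda = \lambda$). A secondary subtlety is the ``reflection'' step in the verification: one needs $\lambda$ regular so that the $<\lambda$-many coefficients of an arbitrary system over $\overline{K}$ are captured by a single $K_n$; this is exactly where the chain being of length $\omega$ is convenient but one still needs $\mathrm{cf}(\lambda)>\omega$. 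Everything else — that generated subgroups of a $\lambda$-sized set inside $H$ have size $\lambda$, that the union is a subgroup, that witnesses genuinely lie in $H$ — is routine once the existential closedness of $H$ is invoked to pull the solutions down from arbitrary overgroups into $H$ itself.
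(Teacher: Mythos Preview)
There is a genuine gap in the verification step. You build $\overline{K}$ as the union of an $\omega$-chain $K_0 \leqslant K_1 \leqslant \cdots$, and then claim that any set of fewer than $\lambda$ coefficients from $\overline{K}$ must lie in a single $K_n$ because $\mathrm{cf}(\lambda) > \omega$. This is the wrong cofinality: what governs whether a small subset of a union of a chain falls into one piece is the cofinality of the \emph{index set of the chain}, not of $\lambda$. Concretely, pick $c_n \in K_n \setminus K_{n-1}$ for each $n<\omega$ (such elements exist since solutions get added at every stage); then $C=\{c_n:n<\omega\}$ is a countable subset of $\overline{K}$, hence certainly of size $<\lambda$, yet $C \not\subseteq K_n$ for any $n$. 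A system built with coefficient set $C$ is never enumerated at any finite stage, so you have no reason to believe $\overline{K}$ solves it.

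The remedy, which is exactly what the paper does, is to run the chain to length $\lambda$ rather than $\omega$: set $\overline{K}=\bigcup_{\alpha<\lambda}\overline{K}_\alpha$. Then, since $\lambda$ is regular, any set of $<\lambda$ coefficients from $\overline{K}$ is contained in some $\overline{K}_\alpha$, and the system in question appears in the enumeration at stage $\alpha+1$. Everything else in your outline (the counting of systems via $\lambda^{<\lambda}=\lambda$, the use of $\kappa$-existential closedness of $H$ to pull solutions down into $H$, and the cardinality bookkeeping) is correct and matches the paper's argument; only the length of the chain needs to change.
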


\begin{proof} Since $\lambda^{<\lambda}=\lambda$, we have that $\lambda$
is regular and moreover, there are only $\lambda$ many systems of less than
 $\lambda$ many equations and inequations with coefficients
  in a group of cardinality $\lambda$. We shall define a chain
    $\{\overline{K}_{\alpha}\}_{\alpha < \lambda}$ of subgroups
 of $H$ of cardinality $\lambda$ by transfinite recursion as follows.

Set $\overline{K}_0=K$. Let $\alpha<\lambda$ and suppose that the subgroups
 $\{\overline{K}_{\beta}\}_{\beta<\alpha}$ have been defined
  so that they form a chain of subgroups of cardinality $\lambda$.
   Let $\{s^{\alpha}_\delta\}_{\delta<\lambda}$ be an enumeration
   of systems of less than $\lambda$ equations and
    inequations with coefficients in $\bigcup_{\beta<\alpha} \overline{K}_{\beta}$.
     We shall define a subgroup $\overline{K}_{\alpha}$
  in which we can solve all these systems whenever it is possible.

In order to define $\overline{K}_{\alpha}$, we shall carry out another transfinite recursion to define a chain of subgroups $\{J^{\alpha}_{\delta}\}_{\delta < \lambda}$ of $H$ as follows:

\begin{itemize}
\item Set $J^{\alpha}_{0}=\bigcup_{\beta<\alpha} \overline{K}_{\beta}$.
\item Let $\delta<\lambda$ and suppose that $J^{\alpha}_{\delta}$ has been defined. If the system $s^{\alpha}_{\delta}$ has a solution in $H$, say $S \subseteq H$, then  $|S|<\lambda$, we define $J^{\alpha}_{\delta+1}=\langle J^{\alpha}_{\delta}, S \rangle$. Otherwise, we define $J^{\alpha}_{\delta+1}=J^{\alpha}_{\delta}$

\item Let $\gamma < \lambda$ be a limit ordinal. We define $J^{\alpha}_{\gamma}=\bigcup_{\delta<\gamma} J^{\alpha}_{\delta}$.
\end{itemize}

We now define $\overline{K}_{\alpha}=\bigcup_{\gamma<\alpha} J^{\alpha}_{\gamma}$.
 A straightforward transfinite induction together with the initial inductive hypothesis that $\{\overline{K}_{\beta}\}_{\beta<\alpha}$ are of cardinality $\lambda$
 shows that each $J^{\alpha}_{\gamma}$ is of cardinality
 $\lambda$ and hence $\overline{K}_{\alpha}$ is of cardinality $\lambda$,
 which completes the recursive construction.

Set $\overline{K}=\bigcup_{\alpha<\lambda} \overline{K}_{\alpha}$.
Since each $\overline{K}_{\alpha}$ is of cardinality $\lambda$,
 so is $\overline{K}$. In order to prove $\lambda$-existential closedness,
  let $s$ be a system of less than $\lambda$ many equations and inequations
  with coefficients in $\overline{K}$ that has a solution in an overgroup of
$\widehat{K} \geqslant \overline{K}$. Since $\lambda$ is regular, there
  must be some $\alpha<\lambda$ such that $\overline{K}_{\alpha}$
 contains all the constants appearing in $s$. But then $s=s^{\alpha+1}_{\delta}$
 for some $\delta<\lambda$. Observe that the system $s$ has a solution in the product
 $H \times \widehat{K}$ and hence has a solution in $H$ because $H$ is
 $\lambda$-existentially closed. It now follows from the successor step
 of the recursive construction that $s$ has a solution in
 $J^{\alpha+1}_{\delta+1} \leqslant \overline{K}_{\alpha} \leqslant \overline{K}$.\end{proof}

We are now ready to prove Theorem \ref{main}

\begin{proof}[Proof of Theorem \ref{main}] Before we start, let us recall that $\lambda^{<\lambda}=\lambda$
for all regular cardinals since we are assuming GCH. Thus Lemma \ref{scottslemmaextended}
 applies for all $\lambda \in I$.

Let $\{g_\alpha\}_{\alpha < \kappa}$ be an enumeration of $G$.
Let $K \mapsto \overline{K}$
be the operator in Lemma \ref{scottslemmaextended}
where the larger group is taken to be $G$.
Set $K_{\lambda}=\langle g_{\alpha}: \alpha < \lambda \rangle$
for each regular cardinal $\lambda \leq \kappa$.
By transfinite recursion on the well-ordered set $I$, define
\begin{itemize}
\item $G_0=\overline{K_{\aleph_0}}$ and
\item $G_{\lambda}=\overline{\langle K_{\lambda}, \bigcup_{\mu < \lambda} G_{\mu}} \rangle$
for every regular cardinal $\lambda \leq \kappa$.
\end{itemize}
Let us now show that the sequence $\{G_{\lambda}\}_{\lambda \in I}$
 is as required using transfinite induction on the well-ordered set $I$.
 As the base case of the induction, since $|K_{\aleph_0}|=\aleph_0$,
 we have that the group $G_0$ is an $\aleph_0$-existentially closed
  group of cardinality $\aleph_0$ by Lemma \ref{scottslemmaextended}.

Now let $\lambda \leq \kappa$ be a regular cardinal and suppose that $G_{\theta}$
is a $\theta$-existentially closed group of cardinality $\theta$
for all regular cardinals $\theta<\lambda$. By the inductive assumption,
 one can show that $\left|\langle K_{\lambda}, \bigcup_{\mu < \lambda} G_{\mu} \rangle\right|=\lambda$
  and hence we must have that
  $G_{\lambda}$ is a $\lambda$-existentially closed
  group of cardinality $\lambda$  by Lemma \ref{scottslemmaextended}.
   This completes the induction.
\end{proof}

The above proof indicates the following:
\begin{corollary}[GCH] Let $G$ be a $\kappa$-existentially closed group of cardinality
 $\lambda\geq \kappa$. Then
 $G$ has a local
 system consisting of $\mu$-existentially closed (and so, simple) groups for each
regular  cardinal $\mu \leq\kappa$.
\end{corollary}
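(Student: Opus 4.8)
The plan is to produce, for each regular cardinal $\mu\le\kappa$, an explicit local system of $G$ all of whose members are $\mu$-existentially closed subgroups of cardinality $\mu$; the whole argument is essentially a repackaging of Lemma \ref{scottslemmaextended}. First I would record that, under GCH, $\mu^{<\mu}=\mu$ for every regular $\mu$, so Lemma \ref{scottslemmaextended} is applicable with ambient group $G$ --- legitimate because $G$, being $\kappa$-existentially closed, is a fortiori $\mu$-existentially closed for every $\mu\le\kappa$. (The value $\mu=\aleph_0$ is not literally covered by the uncountable hypothesis of Lemma \ref{scottslemmaextended}, but this is exactly the primitive version in \cite{Scott51}, and the same recursion goes through verbatim with $\lambda=\aleph_0$.)

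Next, fix a regular $\mu\le\kappa$ and let $\Sigma_\mu$ be the set of all $\mu$-existentially closed subgroups of $G$ of cardinality $\mu$. I claim $\Sigma_\mu$ is a local system of $G$, and the two things to verify are that $\Sigma_\mu$ covers $G$ and that it is directed by inclusion. For the covering condition $G=\bigcup\Sigma_\mu$: given $g\in G$, since $|G|\ge\kappa\ge\mu$ one can choose a subgroup $K\le G$ with $g\in K$ and $|K|=\mu$ (enlarge $\langle g\rangle$ by $\mu$ further elements of $G$ and take the generated subgroup, which then has cardinality exactly $\mu$), and Lemma \ref{scottslemmaextended} applied to $K\le G$ yields $\overline K\in\Sigma_\mu$ with $g\in K\le\overline K$. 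For directedness: given $\overline{K_1},\overline{K_2}\in\Sigma_\mu$, the subgroup $L=\langle\overline{K_1},\overline{K_2}\rangle$ has cardinality $\le\mu+\mu=\mu$, hence exactly $\mu$, so Lemma \ref{scottslemmaextended} applied to $L\le G$ produces $\overline L\in\Sigma_\mu$ with $\overline{K_1},\overline{K_2}\le L\le\overline L$. Finally, every member of $\Sigma_\mu$ is simple since every $\aleph_0$-existentially closed group is simple, which is the standard fact underlying the parenthetical ``and so, simple'' in the statement.

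I expect no serious obstacle here: as the surrounding text of the paper already indicates, the corollary is a direct consequence of Lemma \ref{scottslemmaextended}. The only points needing a little care are (a) the bookkeeping that one can always interpolate a subgroup of cardinality \emph{exactly} $\mu$ between a given small subgroup and $G$, so that the hypothesis ``subgroup of cardinality $\lambda$'' of Lemma \ref{scottslemmaextended} is met, and (b) the boundary case $\mu=\aleph_0$, which one should either fold into the statement of Lemma \ref{scottslemmaextended} or handle by appealing to Scott's theorem directly. Neither of these is substantive.
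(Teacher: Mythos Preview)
Your proposal is correct and follows essentially the same approach as the paper: both arguments invoke Lemma~\ref{scottslemmaextended} to enlarge any subgroup of cardinality $\mu$ inside $G$ to a $\mu$-existentially closed subgroup of the same cardinality, and conclude that these form a local system. The paper's proof is a two-line sketch that leaves the covering and directedness conditions implicit, whereas you spell them out; your remarks on the cardinality bookkeeping and the boundary case $\mu=\aleph_0$ are points the paper passes over in silence.
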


\begin{proof} Let $G$ be a $\kappa$-existentially closed group of cardinality
 $\lambda\geq \kappa$. Then, by Lemma \ref{scottslemmaextended},
 one  may show that for each  regular cardinal $\mu \leq\kappa$,
  the existential closure of a subgroup  of cardinality $\mu$ exists and is of cardinality $\mu$.
 So every subgroup of cardinality $\mu$ is contained  in a
 $\mu$-existentially closed group of cardinality $\mu$.
 Hence $G$ has a local system consisting of $\mu$-existentially
 closed subgroups for any
regular cardinal $\mu$. As each $\mu$-existentially closed
group is simple, it follows that  $G$ has  a local system consisting of simple
 groups of cardinality  $\mu$.
\end{proof}

\section{Universal Groups}

Following Kegel \cite{kegel-2008}, a group $U$ is called a \textit{universal group}
in the class of all groups if every group $H$ with $|H|\leq |U|$ is isomorphic
 to a subgroup of $U$. By Lagrange's theorem,  every nontrivial finite universal group has order $2$. So we only talk about infinite universal groups.

 Kegel constructed examples of universal groups of cardinality
 $\kappa$ for limit cardinals $\kappa$ in \cite[Theorem C]{kegel-2008} and asked the following question:
 For which cardinals $\kappa$ does there exist universal groups of cardinality $\kappa$? Here we answer this question.

 By \cite[Theorem 14]{Neumann73}, there are $2^{\aleph_0}$ pairwise non-isomorphic two generated groups.
  As any countable group can contain at most countably many two generated subgroups, we may conclude that,
   an infinite universal group must have uncountable  cardinality. For uncountable cardinalities, we have Proposition \ref{universal}.

\begin{proof}[Proof of Proposition \ref{universal}]

It follows from Kegel \cite[Theorem C]{kegel-2008} that, starting with any countable group $G_0$ and iterating the right regular representations $G_{\alpha} \hookrightarrow G_{\alpha+1}=Sym(G_{\alpha})$ of symmetric groups $\gamma$ times for any limit ordinal $\gamma$, the regular limit group $G_{\gamma}$ is a universal group of cardinality $\aleph_{\gamma}$. Consequently, there exist universal groups of cardinality $\kappa$ for any singular cardinal $\kappa$ by Kegel's construction.

By \cite[Lemma 2.6]{kk}, an isomorphic copy of every group of
cardinality less than or equal to $\kappa$ can be embedded
in a $\kappa$-existentially closed group of cardinality $\kappa$.
 Hence every $\kappa$-existentially closed group of cardinality $\kappa$ is a universal group
of cardinality $\kappa$. The existence of $\kappa$-existentially closed groups of cardinality $\kappa$ for
each regular cardinality $\kappa$ is shown in \cite[Corollary 8]{KaKeKu}.
 (In particular, for each successor cardinal $\kappa$, there exist
  $\kappa$-existentially closed groups of cardinality $\kappa$.) It follows that, for any uncountable cardinality
$\kappa$, there exist universal groups of cardinality $\kappa$.

\end{proof}

While many of the universal groups obtained in Proposition \ref{universal} are existentially closed, one can obtain universal groups from other groups with different constructions so that some fundamental properties of existentially closed groups such as uniqueness and simpleness are not preserved.

Observe that if $U$ is a universal group, then every group $H \geqslant U$ with $|H|=|U|$ is again universal.
 So the direct product $\bigoplus_{\lambda} U$  of a universal group $U$ of cardinality $\kappa$ is also a universal group where  $\lambda \leq \kappa$. By Fuchs \cite[Theorem 89.2]{fuch2}, for every infinite cardinal $\mu$ less than the
 first strongly inaccessible cardinal,
 there exist $2^\mu$ torsion-free, abelian, indecomposable,
 pairwise non-isomorphic groups  $A_\alpha$, each of which has cardinality $\mu$.
 Then the groups $G_\alpha=U\times A_\alpha $, where $U$
 is a universal group of cardinality $\mu$, is a universal group of cardinality $\mu$. Thus we have $2^\mu$
 pairwise non-isomorphic universal groups of cardinality $\mu$.

 Moreover, one may show using \cite[Corollary 3.3]{bek} that,
 if $G$ is a $\kappa$-existentially closed group of cardinality $\kappa$,
 then the centralizer subgroup $C_G(H)$
 is a universal group of cardinality $\kappa$ for any subgroup $H$ of $G$ generated by fewer than $\kappa$ elements.



\bigskip
\textbf{Burak KAYA}

Department of Mathematics,

Middle East Technical University,

06800, Ankara, Turkey.

burakk@metu.edu.tr
\bigskip

\textbf{Mahmut KUZUCUO\u{G}LU}

Department of Mathematics,

Middle East Technical University,

06800, Ankara, Turkey.

 matmah@metu.edu.tr
\bigskip

\textbf{Patrizia LONGOBARDI}

Department of Mathematics

 University of Salerno

 Via Giovanni Paolo II, 132,

 84084 - Fisciano, Salerno ITALY

  plongobardi@unisa.it

 \bigskip

\textbf{Mercede MAJ}

Department of Mathematics

 University of Salerno

 Via Giovanni Paolo II,  132,

 84084 - Fisciano, Salerno ITALY

mmaj@unisa.it

\end{document}